\documentclass[11pt]{amsart}

\usepackage{lmodern}
\usepackage{microtype}
\usepackage[english]{babel}
\usepackage{mathtools}
\usepackage{hyperref}
\usepackage[hyperpageref]{backref}
\usepackage[msc-links]{amsrefs}

\theoremstyle{plain} 
\newtheorem{theorem}{Theorem} 
\newtheorem{corollary}[theorem]{Corollary}

\DeclareMathOperator{\mre}{Re} 

\begin{document} 
\title{Extremals for linear functionals on $H^p$ are outer when $0<p<1$}

\begin{abstract}
	Let $L$ be a nontrivial bounded linear functional on the Hardy space $H^p$ of the unit disc. We prove that if $0<p<1$ and if $f$ in the unit sphere of $H^p$ satisfies $L(f)=\|L\|$, then $f$ is an outer function. The proof is short and relies on a construction due to de Leeuw and Rudin. Our result complements work of Duren, Romberg, and Shields on the failure of the Hahn--Banach theorem for $H^p$. When applied to the special case $L_k(f) = f^{(k)}(0)/k!$ for $k=1,2,3,\ldots$, it resolves two conjectures of the first-named author and Saksman and one conjecture of Suffridge.
\end{abstract}

\date{\today}

\subjclass{Primary 30H10. Secondary 30J05, 46A16, 46A22.}

\thanks{Brevig is supported by Grant 354537 of the Research Council of Norway.}

\author{Ole Fredrik Brevig} 
\address{Department of Mathematical Sciences, Norwegian University of Science and Technology (NTNU), 7491 Trondheim, Norway} 
\email{ole.brevig@ntnu.no}

\author{\O istein S\o vik}
\address{IT Department, University of Oslo, 0373 Oslo, Norway}
\email{oistein.sovik@usit.uio.no}

\maketitle

\section{Introduction}
For $0<p\leq\infty$, we let $H^p$ denote the classical Hardy space of the unit disc $\mathbb{D}$, equipped with its usual (quasi-)norm $\|\cdot\|_p$. Suppose that $L$ is a nontrivial bounded linear functional on $H^p$. We say that a function $f$ in $H^p$ is an \emph{extremal} for $L$ if $\|f\|_p=1$ and $L(f)=\|L\|$.
 
\begin{theorem} \label{thm:outer}
    Suppose that $0<p<1$ and let $L$ be a nontrivial bounded linear functional on $H^p$. If $f$ is an extremal for $L$, then $f$ is outer.
\end{theorem}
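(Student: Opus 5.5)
The plan is to argue by contradiction. Suppose $f$ is an extremal for $L$ and has the canonical factorization $f = IF$, where $F$ is outer with $\|F\|_p = 1$ and $I$ is a nonconstant inner function. We then aim to build a competitor $u\in H^p$ with $\|u\|_p\leq 1$ and $\mre L(u) > \|L\|$, which is impossible. The tool is the construction of de Leeuw and Rudin: since $I$ is unimodular on $\mathbb{T}$ and $\mre(1\pm I)\geq 0$ in $\mathbb{D}$, the functions $(1\pm I)$ have no zeros and admit analytic powers.

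First I would pass to the extended functional $\Lambda = \mre L$ and use homogeneity. Extremality means that for every $g\in H^p$ we have $\mre L(g) \leq \|L\|\,\|g\|_p$, with equality at $g=f$. Next I would split $f$ into two analytic pieces whose $p$-th powers of moduli add up to $|f|^p$ on $\mathbb{T}$. Set
\[ g = F\Bigl(\frac{1+I}{2}\Bigr)^{2/p}, \qquad h = F\Bigl(\frac{1-I}{2}\Bigr)^{2/p}. \]
Since $|1+I|^2+|1-I|^2=4$ on $\mathbb{T}$, we get $|g|^p+|h|^p=|F|^p=|f|^p$ there, so $\|g\|_p^p+\|h\|_p^p = 1$. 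More generally, for any unimodular $\omega$ one can rotate $I\mapsto \omega I$ to get a family $g_\omega,h_\omega$ with the same property.

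The key step, and the one I expect to be the main obstacle, is to relate $f$ linearly to these pieces so that extremality is violated. The natural relation comes from averaging over $\omega$. Expanding $(1+\omega I)^{2/p}$ and extracting the coefficient of $\omega$ gives $f$ as a constant multiple of $\int \bar\omega\, g_\omega\, dm(\omega)$. Testing $\Lambda$ against this representation yields
\[ \|L\| \leq c_p \int \|L\|\,\|g_\omega\|_p\, dm(\omega). \]
I would then need to show that the right-hand side is strictly less than $\|L\|$ when $I$ is nonconstant and $p<1$. Here the strict subadditivity of $t\mapsto t^p$ (that is, $a^p+b^p>(a+b)^p$) should be the source of strictness. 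If this direct averaging turns out to give a constant $\geq 1$ (my rough computation suggests it might for small $p$), I would try instead a two-point version. Maximize $\mre L(xg+yh)$ over the constraint $|x|^p\|g\|_p^p+|y|^p\|h\|_p^p\le 1$ and exploit that $\|xg+yh\|_p^p\le |x|^p\|g\|_p^p+|y|^p\|h\|_p^p$ when $g$ and $h$ are suitably aligned. Then compare with $f$, which corresponds to a non-optimal choice because $I$ is nonconstant.

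Finally, I would check two technical points. The first is that all functions involved lie in $H^p$, which follows since $|(1\pm I)/2|\le 1$. The second is that the strict inequality really uses that $I$ is nonconstant: if $I$ is constant, then $|1+I|$ is constant on $\mathbb{T}$ and the decomposition degenerates. This forces $f$ to be outer.
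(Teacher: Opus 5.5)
There is a genuine gap, at exactly the step you flag as the main obstacle: your pieces $g$ and $h$ are not linearly related to $f$. The normalization $|g|^p+|h|^p=|f|^p$ on $\mathbb{T}$ cannot coexist with an exact splitting. If $f=g'+h'$ with $|g'|^p+|h'|^p=|f|^p$ a.e. on $\mathbb{T}$, then $|f|^p\le(|g'|+|h'|)^p\le|g'|^p+|h'|^p=|f|^p$. Equality in the second step forces $g'h'=0$ a.e. on $\mathbb{T}$, so one piece vanishes identically.

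Concretely, $f$ is not even in the span of $g$ and $h$. Indeed, $f=xg+yh$ would give $t=x\bigl(\tfrac{1+t}{2}\bigr)^{2/p}+y\bigl(\tfrac{1-t}{2}\bigr)^{2/p}$ on the open set $I(\mathbb{D})$, hence on $\mathbb{D}$. Comparing Taylor coefficients of orders $0$, $1$, $3$ gives $x=-y\neq0$ and then $\binom{2/p}{3}=0$, which is impossible for $2/p>2$. So your two-point fallback has nothing to compare $f$ with. The inequality $\|xg+yh\|_p^p\le|x|^p\|g\|_p^p+|y|^p\|h\|_p^p$ only yields the automatic bound $\mre L(xg+yh)\le\|L\|$.

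The averaging identity $f=c_p\int\bar\omega g_\omega\,dm(\omega)$ with $c_p=\tfrac p2 2^{2/p}$ is correct, but the bound it gives is too weak. Write $a=\int_0^{2\pi}I(e^{i\theta})|F(e^{i\theta})|^p\,\tfrac{d\theta}{2\pi}$. Then $\|g_\omega\|_p=\bigl(\tfrac{1+\mre(\omega a)}{2}\bigr)^{1/p}$, so your right-hand side is $\|L\|$ times $c_p\int\bigl(\tfrac{1+\mre(\omega a)}{2}\bigr)^{1/p}dm(\omega)$. This factor is smallest at $a=0$, where it equals $\tfrac p2 2^{1/p}\ge1$ for $p\le1/2$. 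For every $p\in(0,1)$ it tends to $\Gamma(2/p)/\Gamma(1/p+1)^2>1$ as $|a|\to1$, which a nonconstant $I$ permits (take $F\equiv1$ and $I$ a Blaschke factor with zero near $\mathbb{T}$). So your suspicion is right, and no contradiction results.

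The missing idea is to square rather than raise to the power $2/p$. With $f_\pm=(1\pm I)^2F/2$ one has the exact identity $f=(f_+-f_-)/2$. Hence $\|L\|=L(f)\le\|L\|\cdot\tfrac12(\|f_+\|_p+\|f_-\|_p)$, and what must be shown is that the average of the quasi-norms themselves, not of their $p$-th powers, is $<1$. On $\mathbb{T}$ one has $|f_\pm|=(1\pm\mre I)|F|$, which is linear in $\mre I$. H\"older's inequality with exponents $1/p$ and $1/(1-p)$ (equivalently, Jensen's inequality for the concave map $t\mapsto t^p$ against the probability measure $|F|^p\,d\theta/2\pi$) gives $\|f_\pm\|_p\le1\pm\int_0^{2\pi}\mre I\,|F|^p\,\tfrac{d\theta}{2\pi}$. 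Adding, $\tfrac12(\|f_+\|_p+\|f_-\|_p)\le1$. Equality holds only if $\mre I$ is constant a.e. on $\mathbb{T}$ (using that the outer function $F$ is nonzero a.e.), that is, only if $I$ is constant. The strictness therefore comes from the equality case in H\"older's inequality, not from the subadditivity of $t\mapsto t^p$.
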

 
Here we use the standard terminology that $f$ in $H^p$ is outer if the inner factor $I$ in its inner--outer factorization $f=IF$ is constant. We refer to Duren~\cite{Duren1970} for this and a few other rudimentary aspects of $H^p$ theory. 

The proof of Theorem~\ref{thm:outer} is short: it combines a construction due to de Leeuw and Rudin~\cite{dLR1958} with a suitable application of H\"older's inequality. The assumption that $0<p<1$ enters only via the H\"older exponents.
 
Since $H^p$ is a Banach space when $1\leq p\leq\infty$, the Hahn--Banach theorem implies that every $f$ in the unit sphere of $H^p$ is an extremal for some bounded linear functional. Duren, Romberg, and Shields~\cite{DRS1969}*{Theorem~13} showed that the Hahn--Banach theorem fails for $H^p$ when $0<p<1$ by exhibiting singular inner functions $S$ such that the only bounded linear functional on $H^p$ annihilating the proper closed subspace $SH^p$ is the trivial one. 

Theorem~\ref{thm:outer} demonstrates a different failure: if $f$ is a nontrivial function in $H^p$ that is not outer, then $\lvert L(f)\rvert<\|f\|_p$ for every bounded linear functional $L$ on $H^p$ with $\|L\|\leq1$. We can actually say slightly more.
 
\begin{corollary}\label{cor:hb}
Suppose that $0<p<1$.
\begin{enumerate}
	\item[(a)] If $f$ is a nontrivial function in $H^p$ that is not outer, then
	\[\sup_{\|L\|\leq1}\lvert L(f)\rvert<\|f\|_p.\]
	\item[(b)] If $L$ is a bounded linear functional on $H^p$, then
	\[\|L\|=\sup\{\lvert L(f)\rvert : \|f\|_p\leq1 \text{ and } f \text{ is outer}\}.\]
\end{enumerate}
\end{corollary}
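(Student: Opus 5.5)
Both parts follow from Theorem~\ref{thm:outer} once we know that suprema of the relevant kind are attained. The tool for this is compactness.

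For (a), normalize so that $\|f\|_p=1$; this is harmless, since multiplying by a positive constant neither changes whether $f$ is outer nor affects the strict inequality. Suppose, to get a contradiction, that $\sup_{\|L\|\le 1}|L(f)|=1$. Pick functionals $L_n$ with $\|L_n\|\le1$ and $|L_n(f)|\to1$; after multiplying each $L_n$ by a unimodular constant we may assume $L_n(f)>0$. The dual of $H^p$ for $0<p<1$ is the Banach space $(H^p)^*$, and its closed unit ball is compact in the weak-$*$ topology. The standard Banach--Alaoglu argument (Tychonoff applied to $\prod_{g}\{|z|\le\|g\|_p\}$) does not use local convexity, so it applies here. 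Hence some subnet of $(L_n)$ converges weak-$*$ to a functional $L$ with $\|L\|\le1$. Then $L(f)=\lim L_n(f)=1$, so $\|L\|\ge 1$ and therefore $\|L\|=1=L(f)$. Thus $f$ is an extremal for the nontrivial functional $L$, and Theorem~\ref{thm:outer} forces $f$ to be outer, which is a contradiction. If one prefers sequences, one can use the concrete description of $(H^p)^*$ through coefficients ($L(g)=\sum a_k b_k$ with $b_k=O(k^{1/p-1})$) together with a diagonal argument. In that version, convergence of $L_n(f)$ to $L(f)$ needs a little care, but the net argument avoids the issue entirely.

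For (b), it suffices to show that for any nonzero $g$ with $\|g\|_p\le1$ and any $\varepsilon>0$ there is an outer $h$ with $\|h\|_p\le1$ and $|L(h)|\ge|L(g)|-\varepsilon$. I would obtain such an $h$ by perturbing $g$. The polynomials $g_r(z)=g(rz)$ converge to $g$ in $H^p$, and since $L$ is continuous it is enough to approximate a polynomial $q$. Write $q=c\prod(z-z_j)$. For each zero $z_j$ with $|z_j|<1$, replace it by $z_j/|z_j|\cdot(1+\delta)$, or by $1+\delta$ if $z_j=0$. The resulting polynomial has no zeros in $\overline{\mathbb D}$, so it is outer, and as $\delta\to0$ it converges to $|z_j|$-modified versions of $q$. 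A cleaner route is to replace $q$ by $q+\eta$ for a small constant $\eta$: for all but finitely many directions of $\eta$ (small in modulus) this is a polynomial, and any polynomial with no zeros in the open disc is outer. We then choose $\eta$ so that $q+\eta$ has no zeros in $\mathbb D$. This need not be possible, so the zero-moving approach is the reliable one. In that approach each factor $(z-z_j)$ is moved to $(z-w_j)$ with $|w_j|\ge1$ and $w_j\to z_j$, which changes $q$ continuously in sup norm and hence in $H^p$. After that we rescale to norm at most $1$; the rescaling factor tends to $1$. Continuity of $L$ then gives the bound.

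The main obstacle is the compactness step in (a): the quasi-Banach setting requires checking that the weak-$*$ limit is still bounded with norm at most $1$. Pointwise limits preserve $|L(g)|\le\|g\|_p$, so this works. Part (b) is then routine density.
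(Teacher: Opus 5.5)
Your argument for (a) is correct but takes a different route from the paper. You use weak-$*$ compactness of the dual unit ball (rightly noting that the Tychonoff argument needs no local convexity) to show that if the supremum equalled $\|f\|_p$ it would be attained, and then you apply Theorem~\ref{thm:outer} to the limiting functional. The paper instead reuses the decomposition from the proof of Theorem~\ref{thm:outer}. It gives $\lvert L(f)\rvert\le(\|f_+\|_p+\|f_-\|_p)/2$ for every $\|L\|\le1$, and this is strictly less than $\|f\|_p$ by \eqref{eq:strict} applied to $f/\|f\|_p$. The paper's route avoids compactness and gives an explicit bound on the supremum; yours needs only the statement of the theorem.

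Part (b), however, has a genuine gap. You try to find an outer $h$ close to $g$ in $H^p$, and this is impossible whenever $g$ has a zero in $\mathbb{D}$. Convergence in $H^p$ implies locally uniform convergence in $\mathbb{D}$, outer functions are zero-free in $\mathbb{D}$, and by Hurwitz's theorem a locally uniform limit of zero-free functions is zero-free or identically zero. This is exactly where your zero-moving step breaks down: you need $\lvert w_j\rvert\ge1$ and $w_j\to z_j$ with $\lvert z_j\rvert<1$, and both cannot hold. Your first attempt, moving $z_j$ to $(1+\delta)z_j/\lvert z_j\rvert$, converges to a different polynomial, not to $q$. No density argument can work here. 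The outer function you need must nearly maximize $\lvert L\rvert$ on the unit ball rather than be near $g$, and producing it requires Theorem~\ref{thm:outer} or its proof.

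Your opening sentence points the right way: if $L$ had an extremal, Theorem~\ref{thm:outer} would give (b) at once. Since there is no a priori reason for $L$ to have one, the paper passes to $L_r(g)=L(g_r)$. This functional does have an extremal $h$ by a normal families argument, because $g\mapsto g_r$ turns locally uniform convergence into convergence in $H^p$. Then $h$ is outer by Theorem~\ref{thm:outer}, hence so is $h_r$, and $\|h_r\|_p\le1$ with $L(h_r)=\|L_r\|\to\|L\|$. Alternatively, for $\|f\|_p=1$ the functions $f_\pm$ from the proof of Theorem~\ref{thm:outer} are outer (or zero) and satisfy $\|f_+\|_p+\|f_-\|_p\le2$. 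This gives directly $\lvert L(f)\rvert\le\max_\pm\lvert L(f_\pm)\rvert/\|f_\pm\|_p$, with the maximum taken over the signs for which $f_\pm\neq0$.
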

 
Corollary~\ref{cor:hb}~(b) also holds for $H^1$, as a direct consequence of the result of de Leeuw and Rudin~\cite{dLR1958} that every function in the unit ball is a convex combination of two outer functions in the unit sphere.
 
Consider for $k=1,2,3,\ldots$ the bounded linear functional
\[L_k(f)=\frac{f^{(k)}(0)}{k!}.\]
A normal families argument shows that $L_k$ has extremals. For $0<p<1$, it follows from a special case of a result of S.~Ya.~Khavinson~\cite{Khavinson1986}*{Corollary~II.1.7} (see also~\cite{BS2020}*{Lemma~4}) that every extremal takes the form
\[f=BP^{2/p},\]
where $B$ is a finite Blaschke product and $P$ is an outer polynomial, both of degree at
most $k$. Combining this with Theorem~\ref{thm:outer}, we obtain the following.
 
\begin{corollary}\label{cor:coeff}
    Suppose that $0<p<1$ and $k\geq1$. Every extremal for $L_k$ on $H^p$ is of the form $f=P^{2/p}$, where $P$ is an outer polynomial of degree $k$.
\end{corollary}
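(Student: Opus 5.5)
The proof combines the structural result quoted just before the statement with Theorem~\ref{thm:outer}. Let $f$ be an extremal for $L_k$ on $H^p$, $0<p<1$. By the special case of Khavinson's result (\cite{Khavinson1986}*{Corollary~II.1.7}, \cite{BS2020}*{Lemma~4}) we may write $f=BP^{2/p}$, where $B$ is a finite Blaschke product of degree at most $k$ and $P$ is an outer polynomial of degree at most $k$. Since $P$ is an outer polynomial, it has no zeros in $\mathbb{D}$, so $P^{2/p}$ is a well-defined outer function. Hence the inner--outer factorization of $f$ is $f=B\cdot P^{2/p}$, up to a unimodular constant. Theorem~\ref{thm:outer} says $f$ is outer, so $B$ must be a unimodular constant. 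Absorbing this constant into $P$, which we may do since $c^{p/2}P$ is still an outer polynomial, gives $f=P^{2/p}$ with $\deg P\leq k$.

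It remains to show that $\deg P=k$ exactly, and this is the only non-routine step. I would argue by contradiction. Suppose $\deg P=m<k$. One option is to compare with a dilation-type competitor, but the cleanest route is the substitution $z\mapsto z^{j}$. Let $g(z)=f(z^{j})$ with $j\ge1$. This map is an isometry on $H^p$, and it sends the coefficient of $z^{n}$ to the coefficient of $z^{nj}$.

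The problem is that $f=P^{2/p}$ is an infinite series in general, so its $k$-th coefficient comes from the whole series and not from $P$ alone. I would therefore instead use the known value of $\|L_k\|$, or compare extremal problems across different $k$. Concretely, I would check whether the extremal value $\Phi(p,k)=\|L_k\|$ satisfies a strict inequality that is incompatible with $\deg P<k$. This would use the fact that $\sup|L_k|$ over functions $P^{2/p}$ with $\deg P\le m$ is at most the value attained by degree-$m$ competitors, and then exhibit a perturbation $P+\varepsilon z^{k}$ that strictly increases $\mre L_k$ at first order in $\varepsilon$ while changing the norm only at second order.

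The first-order computation runs as follows. Write $Q=\varepsilon z^k$. Then
\[
\|(P+Q)^{2/p}\|_p^p=\|P+Q\|_2^2=\|P\|_2^2+2\mre\langle P,Q\rangle+|\varepsilon|^2 .
\]
Since $\deg P<k$, we have $\langle P,z^k\rangle=0$, so the norm changes only at order $|\varepsilon|^2$. On the other hand, $L_k\bigl((P+Q)^{2/p}\bigr)$ changes linearly in $\varepsilon$, with coefficient $\tfrac{2}{p}\,\bigl(P^{2/p-1}\bigr)^{\wedge}(0)=\tfrac{2}{p}P(0)^{2/p-1}$. This is nonzero because $P$ is outer, so $P(0)\neq0$. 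Choosing the phase of $\varepsilon$ so that this linear term is positive, and renormalizing, produces a function in the unit sphere with $\mre L_k$ strictly larger than $\|L_k\|$. This contradicts extremality, so $\deg P=k$.

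The main obstacle is carrying out this perturbation carefully: one has to expand $(P+\varepsilon z^k)^{2/p}$ correctly to identify the coefficient of $z^k$ at first order, and check that $(P+\varepsilon z^k)^{2/p}$ is admissible, which holds because for small $\varepsilon$ the polynomial $P+\varepsilon z^k$ has no zeros in the closed disc.
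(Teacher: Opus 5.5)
Your reduction to $f=P^{2/p}$ with $\deg P\le k$ via Theorem~\ref{thm:outer} is correct, and a first-order variational argument is the right idea for showing $\deg P=k$. However, the competitor $(P+\varepsilon z^k)^{2/p}$ is not justified. Your claim that $P+\varepsilon z^k$ has no zeros in the closed disc for small $\varepsilon$ needs $P$ to be zero-free on $\overline{\mathbb{D}}$. An outer polynomial is only zero-free in $\mathbb{D}$ and may vanish on $\mathbb{T}$ (think of $1-z$), and nothing available rules this out for the extremal. If $P(\zeta)=0$ for some $\zeta\in\mathbb{T}$, then $P+\varepsilon z^k$ has a zero near $\zeta$ that lies in $\mathbb{D}$ for a whole range of $\arg\varepsilon$. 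But $\arg\varepsilon$ is already constrained by $\mre\bigl(\varepsilon P(0)^{2/p-1}\bigr)>0$, and with several or multiple zeros on $\mathbb{T}$ these constraints need not be compatible. For instance, a zero of order $m\ge3$ on $\mathbb{T}$ splits into $m$ zeros, at least one of which enters $\mathbb{D}$ for every small $\varepsilon\neq0$. When $2/p\notin\mathbb{Z}$, $(P+\varepsilon z^k)^{2/p}$ is then in general not defined as a holomorphic function on $\mathbb{D}$. So there is no competitor, and the identity $\|(P+Q)^{2/p}\|_p^p=\|P+Q\|_2^2$ does not help.

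The fix is to avoid taking roots of the perturbed function and instead perturb $f$ multiplicatively: $f_\varepsilon=(1+\varepsilon z^k)f$. Then $L_k(f_\varepsilon)=\|L_k\|+\varepsilon f(0)=\|L_k\|+\varepsilon P(0)^{2/p}$ exactly. Write $P(z)=a_0+a_1z+\cdots+a_kz^k$, with $a_k$ possibly zero. Since $\lvert f\rvert^p=\lvert P\rvert^2$ on $\mathbb{T}$, and $\lvert 1+\varepsilon e^{ik\theta}\rvert^p=1+p\,\mre(\varepsilon e^{ik\theta})+O(\lvert\varepsilon\rvert^2)$ uniformly in $\theta$, you get
\[
\|f_\varepsilon\|_p^p=\int_0^{2\pi}\lvert P(e^{i\theta})\rvert^2\,\lvert 1+\varepsilon e^{ik\theta}\rvert^p\,\frac{d\theta}{2\pi}=1+p\,\mre\bigl(\varepsilon\, a_0\overline{a_k}\bigr)+O(\lvert\varepsilon\rvert^2).
\]
If $a_k=0$, taking $\varepsilon P(0)^{2/p}>0$ small gives $\lvert L_k(f_\varepsilon)\rvert>\|L_k\|\,\|f_\varepsilon\|_p$, a contradiction, whatever the zeros of $P$ on $\mathbb{T}$. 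Equivalently, this is the first-order condition $L_k(z^kf)=\|L_k\|\langle z^k,\lvert P\rvert^2\rangle$, that is, $P(0)^{2/p}=\|L_k\|\,a_0\overline{a_k}$. That is exactly what the paper extracts from \cite{BS2020}*{Lemma~3}. The exploratory remarks about $z\mapsto z^j$ and comparing values across $k$ play no role and can be dropped.
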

 
Corollary~\ref{cor:coeff} confirms two conjectures made by the first-named author and Saksman~\cite{BS2020}*{Section~5.1}: no extremal for $L_k$ on $H^p$ with $0<p<1$ vanishes in $\mathbb{D}$ and $\|L_k\|<\|L_{k+1}\|$ for $k=1,2,3,\ldots$.
 
S.~Ya.~Khavinson~\cite{Khavinson1986}*{Theorem~II.2.8} and Suffridge~\cite{Suffridge1990}*{Theorem~1} studied the extremal problem for $L_k$ restricted to nonvanishing functions, and showed that its extremals take the form
\[f=SP^{2/p},\]
where $P$ is an outer polynomial of degree at most $k$ and $S$ is a singular inner function. Suffridge~\cite{Suffridge1990}*{p.~187} conjectured that $S$ is constant when $0<p<1$. In fact, if $0<p<1$, then the extremal problem for any bounded linear functional $L$ on $H^p$ is equivalent to its restriction to nonvanishing functions: they have the same value by Corollary~\ref{cor:hb}~(b), and the same extremals by Theorem~\ref{thm:outer}. In particular, the extremals of the restricted problem for $L_k$ are precisely those described in Corollary~\ref{cor:coeff}, which confirms Suffridge's conjecture.

\subsection*{AI disclosure}
An initial proof of Theorem~\ref{thm:outer} was found by Claude Opus~5.5 when it was tasked with resolving the conjectures from~\cite{BS2020}. That proof was variational and based on ideas going back to F.~Riesz~\cite{Riesz1920}. Corollary~\ref{cor:hb}~(b) led the authors to the work of de Leeuw and Rudin~\cite{dLR1958}, where they found the main idea in the much simpler proof presented below. The authors wrote the final version of the paper and take full responsibility for its content.

\section{Proofs}

\begin{proof}[Proof of Theorem~\ref{thm:outer}]
    Let $f= IF$ be a function in the unit sphere of $H^p$. Following de Leeuw and Rudin~\cite{dLR1958}*{\S2.2}, we write $f=(f_+-f_-)/2$, where
    \[f_\pm=\frac{(1\pm I)^2F}{2}.\]
    Since $I$ is inner, we have $\lvert f_\pm \rvert = (1\pm \mre{I}) \lvert F\rvert$ almost everywhere on the unit circle $\mathbb{T}$. Using next that $\|F\|_p = \|f\|_p=1$ and H\"older's inequality with exponents $1/p$ and $1/(1-p)$, it follows that
    \[\|f_\pm\|_p \leq \int_0^{2\pi} (1\pm \mre{I(e^{i\theta})}) \,\lvert F(e^{i\theta})\rvert^p \,\frac{d\theta}{2\pi} = 1 \pm \int_0^{2\pi} \mre{I(e^{i\theta})}\,\lvert F(e^{i\theta})\rvert^p \,\frac{d\theta}{2\pi}.\]
    Since $F$ is outer, equality holds in this application of H\"older's inequality if and only if $1\pm \mre{I}$ is constant almost everywhere on $\mathbb{T}$. A function in $H^\infty$ whose real part is constant almost everywhere on $\mathbb{T}$ is itself constant, so
    \begin{equation} \label{eq:strict}
        \frac{\|f_+\|_p + \|f_-\|_p}{2} < 1
    \end{equation}
    unless $I$ is constant. If $f$ is an extremal for $L$, then
    \[\|L\| = L(f) = \frac{L(f_+)-L(f_-)}{2} \leq \frac{\lvert L(f_+)\rvert +\lvert L(f_-)\rvert}{2} \leq \|L\| \frac{\|f_+\|_p+\|f_-\|_p}{2}.\]
    Since $\|L\|\neq0$, this shows that \eqref{eq:strict} does not hold, so that $I$ is constant.
\end{proof}

\begin{proof}[Proof of Corollary~\ref{cor:hb}]
    We begin with (a). The last two inequalities in the final display of the proof of Theorem~\ref{thm:outer} hold for every $f$ in $H^p$, and show that \[\lvert L(f)\rvert\leq \frac{\|f_+\|_p+\|f_-\|_p}{2}\] 
    for every bounded linear functional $L$ with $\|L\|\leq1$. Applying \eqref{eq:strict} to $f/\|f\|_p$ shows that the right-hand side is strictly less than $\|f\|_p$ unless $f$ is outer. 

    We continue with (b). Assume that $L$ is nontrivial. For $0<r<1$, let $L_r(f)=L(f_r)$, where $f_r(z)=f(rz)$. Then $L_r$ is nontrivial and $\|L_r\|\to\|L\|$ as $r\to1^-$, since $\|f_r\|_p\leq\|f\|_p$ and $f_r\to f$ in $H^p$ (see~\cite{Duren1970}*{Theorem~2.6}). By a normal families argument, $L_r$ has an extremal $F$, which is outer by Theorem~\ref{thm:outer}. Hence $F_r$ is outer, $\|F_r\|_p\leq1$, and $L(F_r)=L_r(F)=\|L_r\|\to\|L\|$.
\end{proof}

\begin{proof}[Proof of Corollary~\ref{cor:coeff}]
Let $f$ be an extremal for $L_k$. It follows at once from Theorem~\ref{thm:outer} and e.g.~\cite{BS2020}*{Lemma~4} that $f=P^{2/p}$ for an outer polynomial \[P(z)=a_0+a_1z+\cdots+a_kz^k.\] Applying~\cite{BS2020}*{Lemma~3} with $g=h=P$ and $\varphi(z)=z^k$, we find that
\[P(0)^{2/p}=L_k(\varphi f)=\|L_k\|\langle \varphi,\lvert P\rvert^2\rangle=\|L_k\|\,a_0\overline{a_k}.\]
The left-hand side is nonzero since $P$ is outer, so $a_k\neq0$.
\end{proof}

\bibliography{exouter}

\end{document}